\theoremstyle{plain}
\newtheorem{theorem}{Theorem}
\newtheorem{lemma}[theorem]{Lemma}
\newtheorem{conjecture}[theorem]{Conjecture}
\theoremstyle{definition}
\newcommand{\bZ}{\mathbb{Z}}
\begin{document}

\title{On the existence of dense substructures in finite groups}

\author{Ching Wong}
\address{\noindent Department of Mathematics, University of British Columbia, Vancouver, BC, Canada V6T 1Z2}
\email{ching@math.ubc.ca}

\date{}

\begin{abstract}
	Fix $k \geq 6$. We prove that any large enough finite group $G$ contains $k$ elements which span quadratically many triples of the form $(a,b,ab) \in S \times G$, given any dense set $S \subseteq G \times G$. The quadratic bound is asymptotically optimal. In particular, this provides an elementary proof of a special case of a conjecture of Brown, Erd\H{o}s and S\'{o}s. We remark that the result was recently discovered independently by Nenadov, Sudakov and Tyomkyn.
\end{abstract}

\maketitle
\section{Introduction}
This note is devoted to the study of a special case of a long-standing conjecture of Brown, Erd\H{o}s and S\'{o}s \cite{BrownErdosSos73} in 1976, which was originally formulated as a hypergraph extremal problem, and was found equivalent to the following by Solymosi, see \cite{Solymosi15}. 

\begin{conjecture}[Brown-Erd\H{o}s-S\'{o}s \cite{BrownErdosSos73}]
	\label{conj:BES}
	Fix $k \geq 6$. For every $c>0$, there exists a threshold $N = N(c)$ such that if $A$ is a finite quasigroup of order larger than $N$, then for every $S \subseteq A \times A$ with $|S| \geq c|A|^2$ where $c>0$, there exists a set of $k$ elements of $A$ which spans at least $k-3$ triples of the form $(a,b,ab) \in S \times A$.
\end{conjecture}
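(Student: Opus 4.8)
The plan is to work directly with the multiplication table of $A$, which is a Latin square, using \emph{only} the cancellation (Latin) property, since a quasigroup has neither associativity, nor an identity, nor subgroups. Writing $n = |A|$, the goal is to produce a set $X \subseteq A$ with $|X| = k$ for which
\[
  t(X) \;=\; \#\{(a,b) \in S : a \in X,\ b \in X,\ ab \in X\}
\]
is at least $k-3$. I would construct $X$ greedily, one element at a time, arranging that $t$ grows at least as fast as $|X|$.

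The engine is an elementary observation valid in any quasigroup. Suppose $X$ has been built with $|X| < k$ and $X$ is \emph{not} closed under its internal $S$-products, i.e.\ there is a pair $(a,b) \in S$ with $a,b \in X$ but $ab \notin X$. Adjoining the element $ab$ increases $|X|$ by exactly one and increases $t(X)$ by at least one, since the triple $(a,b,ab)$ was uncounted (its product lay outside $X$) but is now internal; crucially, enlarging $X$ never destroys an already-counted triple. Starting from a single pair $(a,b)\in S$, so that $X_0 = \{a,b,ab\}$ has $|X_0| \le 3$ and $t(X_0) \ge 1$, I would iterate this extension. Each successful step adds one element and at least one internal triple, so once $|X|$ reaches $k$ we have $t(X) \ge 1 + (k-3) = k-2 > k-3$, as required.

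The entire difficulty is therefore concentrated in one point: the greedy process can halt before $|X|$ reaches $k$, and it does so \emph{exactly} when the current set becomes closed under its internal $S$-products. For such a trapped set one has $t(X) = \#\{(a,b)\in S : a,b \in X\}$, and a small closed $X$ may contain very few, possibly zero, pairs of $S$, because the density of $S$ is a global hypothesis that constrains nothing about a fixed-size region. Ruling out small product-closed sets that are nearly disjoint from $S$ — equivalently, guaranteeing a seed and a sequence of extensions that reach size $k$ — is the main obstacle, and it is precisely here that the Latin-square structure alone appears insufficient. My plan to overcome it is to inject global density through supersaturation: for $k=6$ the desired conclusion is the Ruzsa--Szemer\'edi $(6,3)$-theorem, obtained from the triangle removal lemma applied to the tripartite graph on three copies of $A$ whose triangles encode the triples $(a,b,ab)$; for larger $k$ I would attempt to bootstrap this into a seed from which the greedy extension provably cannot trap below size $k$. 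I expect the crux to be the overlap control in this bootstrap — forcing $k-3$ triples onto only $k$ elements rather than the $\sim 2(k-3)$ that a generic tree-like growth would consume. This is exactly the feature that coset and subgroup structure supplies in the group case (there even yielding quadratically many triples), and its absence for arbitrary quasigroups is the central difficulty this approach must resolve.
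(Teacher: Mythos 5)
The statement you set out to prove is the Brown--Erd\H{o}s--S\'{o}s conjecture itself, which this paper does \emph{not} prove: it is stated as a conjecture, known only for $k=6$ (Ruzsa--Szemer\'edi) and open for every $k \geq 7$ in the quasigroup setting. The paper's actual contribution (\cref{thm:main}) is a special case: for finite \emph{groups} and large $k$, it finds quadratically many triples, using associativity in an essential way. So there was no complete proof for you to match, and your attempt, honestly, does not supply one either: the gap you yourself isolate --- the greedy process trapping on a small set closed under its internal $S$-products --- is not a technical nuisance to be patched but is the entire content of the conjecture. Your proposed repair does not close it. Restarting with fresh seeds from $S$ buys roughly one triple per three new elements, i.e.\ about $k/3$ triples on $k$ elements, well short of $k-3$; and the bootstrap from the $(6,3)$-theorem ``to a seed from which the greedy extension provably cannot trap'' is exactly what nobody knows how to do without algebraic structure --- even the $k=7$ case for groups (Solymosi) required associativity, and the removal lemma does not supersaturate in the direction you need (it gives many $6$-point configurations, not configurations sharing elements efficiently).

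It is worth seeing how the paper sidesteps your obstruction in the group case, because it abandons the local/greedy viewpoint entirely. Using Pyber's theorem it finds a large abelian subgroup $H$, locates a dense piece of $S$ in a product of cosets $\ell H \times Hr$, and then constructs, for every $x \in H$, an explicit ``box'' $A_x$ of size $k$ (a coordinate box in the cyclic decomposition of $H$) with the doubling property $|A_x A_y| \leq 2k$ for \emph{all} pairs $x,y$. Every triple $(a,b,ab)$ lies in $k^2$ of the sets $B_{x,y} = A_x \times A_y$, so averaging over the $|H|^2$ boxes produces one pair $(A_x, A_y)$ whose $\leq 4k$ elements span at least $c k^2$ triples of $S$. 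The overlap control you correctly identify as the crux is precisely the inequality $|A_x A_y| \leq 2|A_x|$, which is a statement about sumsets in an abelian group and has no analogue in a general quasigroup, where $|A_x A_y|$ can be as large as $k^2$. Your diagnosis of where the difficulty lives is accurate; but the conclusion to draw is that the quasigroup statement is out of reach of this method (and of all known methods for $k \geq 7$), and a correct write-up should either restrict to groups and follow a structure-exploiting route like the paper's, or stop at $k=6$.
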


This conjecture is proved true only when $k = 6$, by Ruzsa and Szemer\'{e}di \cite{RuzsaSzemeredi1978} in 1978. The problem turns out to be delicate and remains unapproachable over decades in its full generality, for all $k \geq 7$.

It is desirable to look for subfamilies of quasigroups for which the conjecture is valid and a natural candidate is groups, where associativity holds. By exploiting this additional structure, the groundbreaking result of Solymosi \cite{Solymosi15} shows the validity of the conjecture when $k=7$ for finite groups.

Using the regularity lemma, Solymosi and the author \cite{SW} extended the result of \cite{Solymosi15} to $k=11,12$ and $k \geq 15$. Surprisingly, instead of the conjectured lower bound $k-3$, we found asymptotically $4k/3$ triples spanned by a set of $k$ elements. One is then led to the question: For finite groups, what is the right magnitude of the maximum number of triples spanned by $k$ elements?

In this note we give an elementary proof that such lower bound is quadratic in $k$, matching the trivial upper bound $k^2$. An immediate consequence is an alternate proof of \cref{conj:BES} for finite groups when $k$ is large.

\begin{theorem}
	\label{thm:main}
	Let $k \geq 6$ be an integer, then there exists a threshold $N = N(k)$ such that if $G$ is a finite group of order larger than $N$, then for every $S \subseteq G \times G$ with $|S| \geq c|G|^2$ where $c > 0$, there exists a set of $k$ elements of $G$ which spans at least $\frac{c}{2^{10}}k^2$ triples $(a,b,ab)$ from $S$, i.e. $(a,b) \in S$.
\end{theorem}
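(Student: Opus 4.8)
The plan is to reduce the problem to locating a single multiplicatively structured set and then to boost its intersection with $S$ by averaging over translates. Concretely, I would look for three sets $A, B, C \subseteq G$ satisfying the containment $A \cdot B \subseteq C$ together with $|A| + |B| + |C| \le k$, and set $K = A \cup B \cup C$ (padded arbitrarily to exactly $k$ elements, which can only increase the count). For such a $K$, every pair $(a,b) \in A \times B$ automatically has $a, b, ab \in K$, so it spans a triple precisely when $(a,b) \in S$; distinct pairs give distinct triples. Thus it suffices to arrange $|S \cap (A \times B)| \ge \frac{c}{2^{10}} k^2$.

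To obtain this density I would not fix $A$ and $B$ outright but take them to be translates of a structured ``core''. Suppose $P \subseteq G$ has the property that $P \cdot P$ lies in a set $Q$ with $|Q| = O(|P|)$; put $A = gP$, $B = Ph$, $C = gQh$ for group elements $g, h$ to be chosen, so that $A \cdot B = gP \cdot Ph \subseteq gQh = C$. Writing $a = g\alpha$, $b = \beta h$ with $\alpha, \beta \in P$, the constraint $(a,b) \in S$ becomes a condition on $(\alpha,\beta)$; when $g, h$ are chosen uniformly at random the coordinates $g\alpha$ and $\beta h$ are independent and uniform on $G$, so each pair lands in $S$ with probability exactly $|S|/|G|^2 \ge c$. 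Hence in expectation at least $c|P|^2$ pairs of $A \times B$ lie in $S$, and fixing a good pair $(g,h)$ realizes this. With $|P| = \Theta(k)$ and $|Q| = O(|P|)$ this yields the required quadratic count, and the generous constant $2^{10}$ leaves ample room for the implied constants (it suffices, e.g., to have $|P| \ge k/32$ with $|Q| \le 2|P|$).

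It remains to produce a core $P$ of size $\Theta(k)$ with $|P \cdot P| = O(|P|)$, and here the cleanest case is when $G$ contains an element $t$ of order at least $k$: take $P = \{t^i : 0 \le i < m\}$ with $m \asymp k$, so that $P \cdot P \subseteq \{t^i : 0 \le i < 2m\} =: Q$, a progression with $|Q| \le 2|P|$ whose powers are all distinct. I expect the main obstacle to be groups of bounded exponent, such as $(\bZ/2)^N$, which contain no long geometric progression. There I would instead use subgroups, which are product-closed, so that $P = H$, $Q = H$ works directly; for the small primes dividing $|G|$ the available subgroup orders are fine enough to land in the admissible window. The unifying device is the \emph{coset progression} $P = H \cdot \{s^i : 0 \le i < l\}$, where $H$ is a subgroup and $s$ an element of large order taken from a commuting (abelian) part: it satisfies $|P \cdot P| \le 2|P|$ and interpolates between pure subgroups ($l = 1$) and pure progressions ($H$ trivial).

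To cover every group uniformly I would run a dichotomy on the largest element order $\rho$. If $\rho \gtrsim k$ the progression applies. Otherwise every prime dividing $|G|$ is below $\rho$, and since $|G|$ exceeds the threshold $N(k)$ some Sylow subgroup is enormous; moreover a sufficiently large group must contain a large abelian subgroup (a $p$-group of order $p^n$ has an abelian subgroup of order at least $p^{\sqrt{n}-1}$, and bounded abelian subgroups across all Sylow subgroups would bound $|G|$). Passing to such an abelian subgroup and invoking the structure theorem, I would tune a coset progression of size $\Theta(k)$ and bounded tripling, then translate by the whole of $G$ as above. \textbf{The technical heart} is precisely this existence statement: verifying that a bounded-tripling set of the correct size lives in every large group, and controlling the distinctness of its elements so that $|A| + |B| + |C| \le k$; once this core is in hand, the averaging step is essentially automatic.
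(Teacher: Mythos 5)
Your proposal is correct, and its engine---one small-doubling ``core'' $P$ whose translates $gP$, $Ph$ are averaged over $(g,h)\in G\times G$---is the same mechanism as the paper's: the paper's sets $A^{(1)}_{\ell x}$, $A^{(2)}_{yr}$ are exactly translates of a fixed coset progression (minus a point), and its two pigeonhole steps (first locating $\ell,r$ with $|S\cap(\ell H\times Hr)|\geq c|H|^2$, then pigeonholing over $(x,y)\in H^2$) amount to your single averaging performed in two stages. Still, you differ in two genuine respects. First, your one-shot averaging, using that $g\alpha$ and $\beta h$ are independent and uniform on $G$, streamlines the bookkeeping: the paper's \cref{lem:condition3} (distinctness of the sets $B_{\ell x,yr}$, which is what forces the deletion of $x$ from $A_x$ and the coordinate-recovery argument) plays no role in your formulation, since linearity of expectation only ever counts index pairs. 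Second, and more substantially, you avoid Pyber's theorem. The paper invokes Pyber in all cases to produce an abelian subgroup of order $e^{\mu\sqrt{\log|G|}}$ and only then splits on the cyclic factors; your dichotomy is on the largest element order of $G$ itself. When some element has order at least $k$, you need no subgroup theorem at all, and when every order is below $k$, Cauchy's theorem bounds the primes dividing $|G|$ by $k$, so some Sylow subgroup has order at least $|G|^{1/k}$, and the classical Burnside-type bound (a group of order $p^n$ has an abelian subgroup of order roughly $p^{\sqrt{n}}$) supplies the large abelian subgroup elementarily. This makes your proof self-contained where the paper's rests on a deep external result. The one place you would still need to write out details is the ``technical heart'' you flag: tuning the coset progression $H'\cdot\{s^i: 0\leq i<t\}$ into the window $k/32\leq |P|\leq k/4$ requires a short case analysis on the most popular prime power $m$ (the regimes $m\leq k/4$ and $k/4<m<k$ need different choices of $t$ and $\rho$, and one must check $\rho+1$ copies of $\bZ_m$ are available), but this is precisely the calculation the paper carries out around \eqref{eq:k}, and your slack constant $2^{10}$ comfortably absorbs it.
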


A stronger result, where the coefficient of $k^2$ is independent of $c$, was recently discovered independently by Nenadov, Sudakov, and Tyomkyn.

The rest of this note is dedicated to the proof of \cref{thm:main}. The main ingredient is the construction of explicit subsets whose arbitrary two-fold products are highly structured. This is demonstrated in \cref{sec:idea} for the model groups $\bZ_n$ and $\bZ_m^n$, which are respectively large in the order and the exponent.

The case of general abelian groups can be readily reduced to these model groups, using the Fundamental Theorem of Finite Abelian Groups, which states that every finite abelian group is a direct product of cyclic groups. Finally, the argument can be carried over to arbitrary finite groups by considering cosets of the form $\ell H$, $H r$ and $\ell H r$, where $H$ is a large abelian subgroup whose existence is guaranteed by a theorem of Pyber's \cite{PY}. This is the content of \cref{sec:proof}.

\section{Idea of the proof}
\label{sec:idea}
Fix $k \geq 1$. Let $G$ be a finite (abelian) group. For every element $x \in G$, we will construct a subset $A_x$ of $G$ with the following properties:

\begin{enumerate}
	\item The set $A_x$ has size $k$.
	
	\item The set $A_xA_y := \{ab \in G : a \in A_x, b \in A_y\}$ has size at most $2k$.
	
	\item If $x \neq y$, then $A_x \neq A_y$.
	
	\item Any $(a,b,ab) \in G^3$ is contained in $k^2$ many $B_{x,y}$'s, where
	\[
	B_{x,y} := \{ (a,b,ab) \in G^3 : a \in A_x, b \in A_y \}.
	\]	
\end{enumerate}

Then, by pigeonhole principle, together with (3) and (4), there exists $B_{x,y}$ that contains at least $|S| k^2 / |G|^2$ triples from $S$, for any $S \subseteq G \times G$. Note that the triples in $B_{x,y}$ are spanned by the elements $A_x \cup A_y \cup A_xA_y$, which has size at most $4k$, by condition (1) and (2). The assumption that $|S| \geq c|G|^2$ implies that there is a set of $4k$ elements of $G$ which spans at least $ck^2$ triples from $S$.

To fix the idea, we consider the model cases $\bZ_n$ and $\bZ_m^n$.

\subsection{When $G \cong \bZ_n$ is a cyclic group}
\label{sec:cyclicGroups}
Fix $k \geq 1$ and let $n \geq 4k$. For $x \in \bZ_n$, let
\[
A_x = \{ x+i \in \bZ_n : 0 \leq i \leq k-1 \}
\]
be a set of $k$ elements of $\bZ_n$. We check the other 3 conditions one by one. The set
\[
A_x + A_y = \{ a + b : a \in A_x, b \in A_y \} = \{ x+y+j \in \bZ_n : 0 \leq j \leq 2k-2 \}
\]
has $2k-1$ elements. It is clear that different elements $x,y \in \bZ_n$ yield different sets $A_x$, $A_y$. To see that the last condition holds, one can choose $x$ from the set $\{a, a-1, \ldots, a-(k-1)\} \subseteq \bZ_n$, and choose $y$ from the set $\{b, b-1, \ldots, b-(k-1)\} \subseteq \bZ_n$, given any $(a,b,a+b) \in \bZ_n^3$.

\subsection{When $G \cong \bZ_m^n$, for some $m \geq 2$}
\label{sec:Zmn}
Fix integers $\rho \geq 1$, $1 \leq t < m$ and let $n$ be large such that $m^n \geq 4tm^\rho$. (We suppose for now that $k = tm^\rho - 1$.) For $x = (x_1,\ldots,x_n) \in \bZ_m^n$, let
	\[
	A_x = \{ (z_1,\ldots,z_{\rho},x_{\rho+1}+i,x_{\rho+2},\ldots,x_n) \in \bZ_m^n : z_i \in \bZ_m, 0 \leq i \leq t-1\} \backslash \{x\}
	\]
	be a set of $tm^\rho - 1$ elements of $\bZ_m^n$. The set
	\[
	\begin{split}
	& \quad \,\, A_x+A_y = \{ a + b : a \in A_x, b \in A_y \} \\
	& \subseteq \{ (z_1,\ldots,z_{\rho}, x_{\rho+1} + y_{\rho+1} + j, x_{\rho+2} + y_{\rho+2}, \ldots, x_n+y_n) \in \bZ_m^n : z_i \in \bZ_m, 0 \leq j \leq 2t-2 \}
	\end{split}
	\]
	has size at most $(2t-1)m^\rho \leq 2(tm^\rho-1)$. Since $x \not\in A_x$, it is easy to see that different elements $x,y \in \bZ_m^n$ yield different sets $A_x$, $A_y$. Finally, given $(a,b,a+b) \in (\bZ_m^n)^3$, the number of $B_{x,y}$ that contains $(a,b,a+b)$ is $(tm^\rho-1)^2$. Indeed, one can choose $x$ from the set
	\[
	\{ (z_1,\ldots,z_{\rho}, a_{\rho+1} - i, a_{\rho+2}, \ldots, a_n) \in \bZ_m^n : z_i \in \bZ_m, 0 \leq i \leq t-1 \} \backslash \{a\}.
	\]
	and choose $y$ from the set
	\[
	\{ (z_1,\ldots,z_{\rho},b_{\rho+1} - i, b_{\rho+2}, \ldots, b_n) \in \bZ_m^n : z_i \in \bZ_m, 0 \leq i \leq t-1 \} \backslash \{b\}.
	\]

\section{Proof of \cref{thm:main}}
\label{sec:proof}

Let us recall Pyber's theorem on the existence of large abelian subgroup of a finite group.

\begin{theorem} [Pyber \cite{PY}]
	\label{thm:Pyber}
	There exists $\mu > 0$ such that every finite group $G$ contains an abelian subgroup $H$ of order at least $e^{\mu\sqrt{\log |G|}}$.
\end{theorem}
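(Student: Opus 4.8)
The plan is to prove the equivalent ``quadratic'' reformulation of the statement: writing $a(G)$ for the order of a largest abelian subgroup of $G$, it suffices to produce a universal constant $C$ with $\log|G| \le C\,(\log a(G))^2$, since exponentiating then yields the theorem with $\mu = C^{-1/2}$. I would establish this inequality by induction on $|G|$, at each stage peeling off a minimal normal subgroup $N$, which by the structure of minimal normal subgroups is a direct power $N \cong T^\ell$ of a simple group $T$.

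The base case is that of $p$-groups (and hence, by selecting a Sylow factor of largest order, nilpotent groups), and it requires no classification input. If $|P| = p^n$, a centralizer induction — take a maximal abelian normal subgroup $A$, use the standard fact $C_P(A) = A$ so that $P/A$ embeds into $\mathrm{Aut}(A)$ — shows that $P$ has an abelian subgroup of order at least $p^m$ with $\binom{m+1}{2} \ge n$, i.e. $m = \Omega(\sqrt{n})$. This already gives $\log|P| \le C\,(\log a(P))^2$ with room to spare, and it is precisely the source of the square root in the final bound; it is also essentially sharp, as extraspecial and maximal-class $2$-groups demonstrate.

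For the inductive step I would split according to the type of $N$. When $N \cong C_p^\ell$ is elementary abelian it is itself an abelian subgroup, so $a(G) \ge p^\ell$. When $N \cong T^\ell$ with $T$ nonabelian simple, I would invoke the classification of finite simple groups to supply, for each such $T$, an abelian subgroup satisfying $\log|T| \le C'\,(\log a(T))^2$; this is checked case by case (for $A_n$, an abelian subgroup of order roughly $3^{n/3}$ against $|A_n| \approx n!$, with analogous large abelian subgroups in the classical and exceptional families), and raising to the $\ell$-th power gives $a(N) \ge a(T)^\ell$. In either case one must then combine the information about $N$ with the inductive bound applied to $G/N$.

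The hard part, and the technical heart of the whole argument, is this gluing step. A subgroup of $G$ that is abelian modulo $N$ need not be abelian, and the conjugation action of $G/N$ on $N$ — together with the permutation action of $G$ on the $\ell$ simple factors — can force a passage to centralizers that shrinks the abelian subgroup one is trying to build. The correct induction hypothesis is therefore not the single inequality above but a statement carried along an entire chief series, maintaining an \emph{explicit} abelian subgroup and controlling, layer by layer, how much of it is forced to centralize the next chief factor. Bounding the accumulated loss so that it grows only quadratically — which is exactly what pins the scale at $\sqrt{\log|G|}$ rather than anything larger — is where the real work lies; the abelian and nonabelian layers are absorbed by the base case and the classification input respectively, and the permutation action on the factors is folded into the same induction.
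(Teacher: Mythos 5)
First, a point of orientation: the paper does not prove this statement at all --- \cref{thm:Pyber} is imported as a black box with a citation to Pyber \cite{PY}, so there is no ``paper proof'' to compare against. Your proposal is therefore an attempt at Pyber's theorem itself, and it has to be judged as such. As it stands it is a plan, not a proof: you yourself identify the gluing step along a chief series as ``where the real work lies,'' and then assert rather than establish that the accumulated loss from passing to centralizers of successive chief factors ``grows only quadratically.'' That assertion is the entire theorem. A chief series can have $\Theta(\log|G|)$ layers, an abelian subgroup of $G/N$ does not lift to an abelian subgroup of $G$, and abelian subgroups found in different layers need not commute with one another; nothing in the proposal controls these losses. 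Indeed, the naive composition loses the exponent: from $C_G(F(G)) \leq F(G)$ one gets only $|G| \leq |F|\cdot|\mathrm{Aut}(F)| \leq |F|^{O(\log|F|)}$, hence $\log|F| \gtrsim \sqrt{\log|G|}$, and feeding this into the $p$-group bound $\log a(F) \gtrsim \sqrt{\log|F|}$ yields $a(G) \geq e^{c(\log|G|)^{1/4}}$ --- a fourth root, not the claimed square root. Pyber's actual argument avoids layer-by-layer accumulation by a \emph{global} step: the action of $G/F^*(G)$ on $F(G)/\Phi(F(G))$ is faithful (modulo center considerations) and completely reducible, and Pálfy--Wolf-type bounds on completely reducible linear groups, together with CFSG control of outer automorphisms of the components, show $|G|$ is \emph{polynomially} bounded in $|F^*(G)|$; only then is the Burnside/Miller $p$-group bound applied to the Sylow subgroups of $F(G)$. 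That mechanism is absent from your sketch, and without it or a substitute, the induction you describe does not close with the right exponent.

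Two smaller repairs, should you pursue this: the parenthetical reduction of nilpotent groups to ``a Sylow factor of largest order'' is too lossy --- if $G$ has $\omega \approx \log|G|/\log\log|G|$ prime divisors, the largest Sylow subgroup only gives $\log a(G) \gtrsim \sqrt{\log\log|G|}$. The fix is easy but should be said: take the direct product of large abelian subgroups of \emph{all} the Sylow factors (they commute in a nilpotent group), and use superadditivity of $\sum_i \sqrt{n_i \log p_i}$ over $\sqrt{\sum_i n_i \log p_i}$. Also, the centralizer argument with a maximal abelian normal subgroup $A \leq P$ gives $|P| \leq |A|\cdot|\mathrm{Aut}(A)|$, hence $n \leq m^2+m$; the sharper $\binom{m+1}{2} \geq n$ you quote comes from the Burnside chain argument, not from the automorphism embedding --- either suffices asymptotically, but they should not be conflated. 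For the purposes of the present paper, of course, none of this machinery is needed: \cref{thm:Pyber} is correctly used as a known external result, and re-proving it is well beyond the paper's scope.
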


For a fixed $k \geq 1$, let $G$ be a finite group that has so large an order that contains an abelian subgroup $H$ with
\begin{equation}
\label{eq:n}
|H|\geq e^{\mu\sqrt{\log |G|}} \geq k^{3k \log k}.
\end{equation}
By the Fundamental Theorem of Finite Abelian Groups, $H$ is isomorphic to
\[
\bZ_{m_1} \times \cdots \times \bZ_{m_\tau},
\]
where $m_i$'s are prime powers, with $|H| = \prod_{i=1}^\tau m_i$. Let $\phi : \bZ_{m_1} \times \cdots \times \bZ_{m_\tau} \to H$ be an isomorphism.

By pigeonhole principle, there exist elements $\ell, r \in G$ such that
\begin{equation}
\label{eq:SlHr}
|S \cap (\ell H \times Hr)| \geq c|H|^2.
\end{equation}

Depending on the values $m_i$'s, we have 2 cases.

\textbf{If $m_i > k$ for some $i$,} reorder the cyclic groups if necessary, assume that $m_1 > k$. In this case, we set $\rho = 0$ and $t = k$. We show in this section that we can get a set of $4(k-1) < 4k$ elements of $G$ that spans at least $c(k-1)^2 \geq \frac c 4 k^2$ triples from $S$.

\textbf{If $m_i \leq k$ for all $i$,} reorder the cyclic groups if necessary, assume that $m_1 = m_2 = \cdots = m_\lambda = m \geq 2$ is the most popular index among the $m_i$'s and it occurs $\lambda$ times. In this case, we set integers $1 \leq t < m$ and $\rho \geq 1$ such that
\begin{equation}
\label{eq:k}
tm^\rho \leq k < (t+1)m^\rho,
\end{equation}
and we get a set of $4(tm^\rho-1) \leq 4k$ elements of $G$ that spans at least $c(tm^\rho-1)^2 \geq \frac c {16} k^2$ triples from $S$.

We note that in the second case, $\rho < \lambda$. By \eqref{eq:n}, we have
\[
k^{3k \log k} \leq |H| = \prod_{i=1}^\tau m_i \leq k^\tau,
\]
which implies that $\tau \geq 3k \log k \geq 2k\log k / \log 2 \geq 2k \log k / \log m$. Since there are at most $k$ possible distinct values of $m_i$'s, $\lambda \geq 2\log k / \log m > \log k / \log m$. On the other hand, \eqref{eq:k} implies that $k \geq m^\rho$, and so $\rho \leq \log k / \log m$.

Now, with the above chosen $\rho$ and $t$, we do the following. Let $\omega_1 = (1,0,\ldots,0) \in \bZ_{m_1} \times \cdots \times \bZ_{m_\tau}$, $\omega_2 = (0,1,0,\ldots,0) \in \bZ_{m_1} \times \cdots \times \bZ_{m_\tau}$, and so on. For $x \in H$, define
\[
A^{(1)}_{\ell x} := \{ \ell x \phi(\omega_1)^{\sigma_1} \cdots \phi(\omega_\rho)^{\sigma_\rho} \phi(\omega_{\rho+1})^\sigma : 0 \leq \sigma_i \leq m-1 \text{ and } 0 \leq \sigma \leq t-1 \}\backslash \{\ell x\}\subseteq \ell H,
\]
and
\[
A^{(2)}_{x r} := \{x \phi(\omega_1)^{\sigma_1} \cdots \phi(\omega_\rho)^{\sigma_\rho} \phi(\omega_{\rho+1})^\sigma r : 0 \leq \sigma_i \leq m-1 \text{ and } 0 \leq \sigma \leq t-1 \}\backslash \{xr\} \subseteq Hr.
\]

We now check the 4 corresponding conditions stated in \cref{sec:idea} one by one, as 4 lemmata.

\begin{lemma}
	\label{lem:condition1}
	For each $x \in H$, we have
	\[
	|A^{(1)}_{\ell x}| = |A^{(2)}_{x r}| = tm^\rho - 1.
	\]
\end{lemma}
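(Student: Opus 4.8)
The plan is to reduce the count to a statement about the abelian group $\bZ_{m_1} \times \cdots \times \bZ_{m_\tau}$ via the isomorphism $\phi$. Since left multiplication by $\ell x$ (and, for the second set, the map $g \mapsto xgr$) is injective on $H$, and $\phi$ is a bijection, both cardinalities equal the number of distinct group elements of the form $\phi(\omega_1)^{\sigma_1}\cdots\phi(\omega_\rho)^{\sigma_\rho}\phi(\omega_{\rho+1})^{\sigma}$ as $(\sigma_1,\ldots,\sigma_\rho,\sigma)$ ranges over $\{0,\ldots,m-1\}^\rho \times \{0,\ldots,t-1\}$, minus one for the removed element. So first I would use that $\phi$ is a homomorphism to rewrite each such product as $\phi(\sigma_1,\ldots,\sigma_\rho,\sigma,0,\ldots,0)$, thereby reducing everything to counting distinct tuples in $\bZ_{m_1} \times \cdots \times \bZ_{m_\tau}$.

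The heart of the argument is that these tuples are pairwise distinct. Here I would invoke the two structural facts from the current case: $m_1 = \cdots = m_\lambda = m$ together with $\rho < \lambda$, which force $m_1 = \cdots = m_\rho = m_{\rho+1} = m$. Consequently each $\sigma_i \in \{0,\ldots,m-1\}$ is a genuine residue modulo $m_i = m$ for $1 \leq i \leq \rho$, and since $t < m = m_{\rho+1}$ each $\sigma \in \{0,\ldots,t-1\}$ is a genuine residue modulo $m_{\rho+1}$. Thus distinct exponent choices yield distinct tuples, hence (by injectivity of $\phi$) distinct group elements, giving exactly $tm^\rho$ of them.

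To finish, I would observe that the choice $\sigma_1 = \cdots = \sigma_\rho = \sigma = 0$ recovers $\ell x$ (respectively $xr$), so the removed element indeed belongs to the size-$tm^\rho$ set, and deleting it leaves $tm^\rho - 1$ elements. The argument for $A^{(2)}_{xr}$ is identical once one notes the injectivity of $g \mapsto xgr$ on $H$. I do not expect a genuine obstacle: the only delicate point is ensuring the exponent ranges inject into their cyclic factors, which is exactly where the inequalities $\rho < \lambda$ and $t < m$ enter; without $m_{\rho+1} = m$ the values of $\sigma$ could collide and the count would drop below $tm^\rho$.
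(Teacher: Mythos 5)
Your proof is correct and follows essentially the same route as the paper's: both reduce the count through the isomorphism $\phi$ to distinct tuples in $\bZ_{m_1} \times \cdots \times \bZ_{m_\tau}$, using $\rho < \lambda$ (so that $m_1 = \cdots = m_{\rho+1} = m$) together with $t < m$ to make the exponent ranges inject, and then subtract one for the removed element. If anything, your write-up is slightly more explicit than the paper's, which only records the distinctness of the individual powers $\phi(\omega_i)^{\sigma_i}$ and leaves implicit both the distinctness of the full products (your tuple argument) and the injectivity of the maps $g \mapsto \ell x g$ and $g \mapsto x g r$.
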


\begin{proof}
	Since $\tau \geq \rho+1$, we have $m_i = m$ for all $1 \leq i \leq \rho+1$. Hence, the elements
	\[
	\{\sigma_i\omega_i \in \bZ_{m_1} \times \cdots \times \bZ_{m_\tau} : 1 \leq i \leq \rho+1 \text{ and } 0 \leq \sigma_i \leq m-1 \}
	\] are all distinct. This implies that the elements
	\[
	\{\phi(\omega_i)^{\sigma_i} \in H : 1 \leq i \leq \rho+1 \text{ and } 0 \leq \sigma_i \leq m-1 \}
	\]
	are distinct as well. With $t < m$, each of the sets $A^{(1)}_{\ell x}$ and $A^{(2)}_{x r}$ has size $tm^\rho - 1$.
\end{proof}

\begin{lemma}
	\label{lem:condition2}
	For $x,y \in H$, we have
	\[
	|A^{(1)}_{\ell x} A^{(2)}_{y r}| \leq 2(tm^\rho-1).
	\]
\end{lemma}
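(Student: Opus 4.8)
The plan is to expand a typical product and exploit that $\ell$ sits at the far left and $r$ at the far right, with everything in between living in the abelian group $H$. Concretely, take $a \in A^{(1)}_{\ell x}$ and $b \in A^{(2)}_{yr}$, say
\[
a = \ell\, x\, \phi(\omega_1)^{\sigma_1}\cdots\phi(\omega_\rho)^{\sigma_\rho}\phi(\omega_{\rho+1})^{\sigma}, \qquad
b = y\, \phi(\omega_1)^{\sigma_1'}\cdots\phi(\omega_\rho)^{\sigma_\rho'}\phi(\omega_{\rho+1})^{\sigma'}\, r.
\]
Since $x,y$ and all the $\phi(\omega_i)$ lie in the abelian group $H$, I can pull $\ell$ out to the left, $r$ out to the right, and collect the powers of each $\phi(\omega_i)$:
\[
ab = \ell\,(xy)\,\phi(\omega_1)^{\sigma_1+\sigma_1'}\cdots\phi(\omega_\rho)^{\sigma_\rho+\sigma_\rho'}\phi(\omega_{\rho+1})^{\sigma+\sigma'}\, r.
\]

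First I would observe that the prefix $\ell$, the postfix $r$, and the constant $xy \in H$ are the same for every product, so the map sending the $H$-element $h = \phi(\omega_1)^{\sigma_1+\sigma_1'}\cdots\phi(\omega_{\rho+1})^{\sigma+\sigma'}$ to $ab = \ell (xy) h r$ is injective. Hence $|A^{(1)}_{\ell x}A^{(2)}_{yr}|$ equals the number of distinct such $h$, which is at most the number of distinct exponent tuples $(\sigma_1+\sigma_1', \ldots, \sigma_\rho + \sigma_\rho', \sigma + \sigma')$ read modulo $m$ (recall $m_i = m$ for $1 \le i \le \rho+1$, so each $\phi(\omega_i)$ has order $m$). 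The first $\rho$ entries range over all of $\bZ_m$, contributing at most $m^\rho$ possibilities, while the last entry $\sigma + \sigma'$ is constrained to the range of a sum of two indices drawn from $\{0, \ldots, t-1\}$.

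Then I would split on whether $\rho \ge 1$. When $\rho \ge 1$, deleting the base points $\ell x$ and $xr$ is immaterial: the last exponent can still take any value in $\{0, 1, \ldots, 2t-2\}$, giving at most $2t-1$ choices and a total of at most $(2t-1)m^\rho = 2tm^\rho - m^\rho \le 2(tm^\rho - 1)$, where the final inequality uses $m^\rho \ge 2$. When $\rho = 0$ (the cyclic-type case, where $t = k$ and $m = m_1 > k$), the deletion becomes essential: since $A^{(1)}_{\ell x}$ and $A^{(2)}_{yr}$ no longer contain the index $\sigma = 0$, both $\sigma$ and $\sigma'$ lie in $\{1, \ldots, t-1\}$, so $\sigma + \sigma' \in \{2, \ldots, 2t-2\}$ takes at most $2t-3$ values, and $m^\rho = 1$ yields $2t-3 \le 2(tm^\rho-1) = 2t-2$.

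I expect the main obstacle to be exactly this $\rho = 0$ case: the naive count $(2t-1)m^\rho$ overshoots the claimed bound by one when $m^\rho = 1$, so the argument must use that removing $\{\ell x\}$ and $\{xr\}$ forces $\sigma, \sigma' \ge 1$ and trims the reachable sum-range from $\{0, \ldots, 2t-2\}$ down to $\{2, \ldots, 2t-2\}$. The only other point to keep straight is that reducing the exponents modulo $m$ (which may cause wrap-around when $\rho \ge 1$, or when $2t-2 \ge m_1$ in the cyclic case) can only decrease the number of distinct values of $h$, so the combinatorial count of exponent tuples remains a valid upper bound throughout.
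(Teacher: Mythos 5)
Your argument is correct, and its core is the same as the paper's: expand a typical product, use that $H$ is abelian to push $\ell$ to the left and $r$ to the right and to collect the powers of each $\phi(\omega_i)$, then bound the product set by counting exponent tuples ($m^\rho$ choices for the first $\rho$ coordinates, and the possible values of $\sigma+\sigma'$ for the last one). The place where you genuinely depart from the paper is your final case split on $\rho$, and this is to your credit: the paper ends its proof with the single inequality $(2t-1)m^\rho \leq 2(tm^\rho-1)$, which is equivalent to $m^\rho \geq 2$ and therefore fails in the case $\rho = 0$, $t = k$ (the choice made in \cref{sec:proof} when some $m_i > k$), where $(2t-1)m^\rho = 2k-1$ exceeds the claimed bound $2(tm^\rho-1) = 2k-2$. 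Your observation that the removal of the base points $\ell x$ and $yr$ forces $\sigma, \sigma' \geq 1$ when $\rho = 0$, so that $\sigma+\sigma' \in \{2,\ldots,2t-2\}$ and the product set has at most $2t-3 \leq 2t-2$ elements, is exactly the step needed to close this gap; and your remark that reducing exponents modulo $m$ can only merge values, hence only helps an upper bound, correctly disposes of wrap-around. In short, your proof is complete, whereas the paper's own proof of this lemma is, as written, incomplete in the $\rho = 0$ case, and your case analysis is precisely the repair.
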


\begin{proof}
	Recall that $H$ is abelian. We write
	\[
	\begin{split}
	& \quad \,\, A^{(1)}_{\ell x} A^{(2)}_{y r} \\
	& \subseteq \{\ell x \phi(\omega_1)^{\sigma_1} \cdots \phi(\omega_\rho)^{\sigma_\rho} \phi(\omega_{\rho+1})^\sigma y \phi(\omega_1)^{\psi_1} \cdots \phi(\omega_\rho)^{\psi_\rho} \phi(\omega_{\rho+1})^\psi r: 0 \leq \sigma_i,\psi_j \leq m-1 \text{ and } 0 \leq \sigma,\psi \leq t-1 \} \\
	& =  \{\ell x \phi(\omega_1)^{\sigma_1+\psi_1} \cdots \phi(\omega_\rho)^{\sigma_\rho+\psi_\rho} \phi(\omega_{\rho+1})^{\sigma + \psi} y r : 0 \leq \sigma_i,\psi_j \leq m-1 \text{ and } 0 \leq \sigma,\psi \leq t-1 \} \\
	& =  \{\ell x \phi(\omega_1)^{\sigma_1} \cdots \phi(\omega_\rho)^{\sigma_\rho} \phi(\omega_{\rho+1})^{\sigma} y r : 0 \leq \sigma_i \leq m-1 \text{ and } 0 \leq \sigma \leq 2t-2 \},
	\end{split}
	\]
	which shows that the set $A^{(1)}_{\ell x} A^{(2)}_{x r} \subseteq \ell H r$ has size at most $(2t-1)m^\rho \leq 2(tm^\rho-1)$.
\end{proof}

The lemma below guarantees that the sets $B_{\ell x,yr}$ are distinct for different pairs of $(x,y) \in H^2$, where
\[
B_{\ell x,yr} := \{ (\ell a,b r,\ell abr) \in \ell H \times H r \times \ell H r : \ell a \in A^{(1)}_{\ell x}, br \in A^{(2)}_{y r} \}.
\]

\begin{lemma}
	\label{lem:condition3}
	\begin{enumerate}
		\item If $A^{(1)}_{\ell x} = A^{(1)}_{\ell y}$ for some $x,y \in H$, then $x = y$.
		\item If $A^{(2)}_{xr} = A^{(2)}_{yr}$ for some $x,y \in H$, then $x = y$.
	\end{enumerate}
\end{lemma}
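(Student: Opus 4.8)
The plan is to reduce both parts to a single statement about translation stabilizers in the additive group $\bZ_{m_1}\times\cdots\times\bZ_{m_\tau}$, and then to read off $x=y$ one coordinate block at a time, exactly as in the model computation of \cref{sec:Zmn}. Write
\[
P := \{\phi(\omega_1)^{\sigma_1}\cdots\phi(\omega_\rho)^{\sigma_\rho}\phi(\omega_{\rho+1})^{\sigma} : 0\le \sigma_i\le m-1,\ 0\le\sigma\le t-1\}\subseteq H
\]
for the common ``shape'' factor, so that $e\in P$ and, since left and right multiplication are injective, $A^{(1)}_{\ell x}=\ell x\,(P\setminus\{e\})$ and $A^{(2)}_{xr}=x\,(P\setminus\{e\})\,r$. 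Suppose $A^{(1)}_{\ell x}=A^{(1)}_{\ell y}$. Cancelling $\ell y$ on the left (note $(\ell y)^{-1}\ell x=y^{-1}x$) gives $g\,(P\setminus\{e\})=P\setminus\{e\}$ with $g:=y^{-1}x\in H$. Applying the isomorphism $\phi^{-1}$ onto the abelian group and setting $h:=\phi^{-1}(g)$, $Q:=\phi^{-1}(P)$ turns this into
\[
h+(Q\setminus\{0\})=Q\setminus\{0\},
\]
where, because $m_i=m$ for $1\le i\le\rho+1$, the set $Q=\{(\sigma_1,\ldots,\sigma_\rho,\sigma,0,\ldots,0):0\le\sigma_i\le m-1,\ 0\le\sigma\le t-1\}$ is an explicit box containing the origin. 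It then suffices to prove $h=0$.

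To do this I would split the coordinates into three blocks. First, every element of $Q$ vanishes in coordinates $\rho+2,\ldots,\tau$, so evaluating the displayed identity on any fixed element of the (nonempty) set $Q\setminus\{0\}$ forces $h_j=0$ for all $j\ge\rho+2$. Second, projecting onto the $(\rho+1)$-st coordinate gives $h_{\rho+1}+I=I$, where $I$ is the image of $Q\setminus\{0\}$, a nonempty proper contiguous interval of $\bZ_m$ (equal to $\{0,\ldots,t-1\}$ when $\rho\ge1$ and to $\{1,\ldots,t-1\}$ when $\rho=0$, both of length at most $t<m$); a translate of such an interval equals itself only for the zero translate --- one sees this by noting the interval has a single ``boundary'' which a nonzero periodicity would have to reproduce more than once --- so $h_{\rho+1}=0$. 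Third, $h$ is now supported on coordinates $1,\ldots,\rho$, where $Q$ is the full $\bZ_m^{\rho}$; hence $h+Q=Q$ and so $h+(Q\setminus\{0\})=Q\setminus\{h\}$, and comparing with $Q\setminus\{0\}$ while using $0,h\in Q$ yields $h=0$. This gives $g=e$, i.e.\ $x=y$, which is (1).

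Part (2) is identical in spirit: from $A^{(2)}_{xr}=A^{(2)}_{yr}$, cancelling $r$ on the right and then $y$ on the left again produces $g\,(P\setminus\{e\})=P\setminus\{e\}$ with $g\in H$, and the same three-block argument gives $x=y$. The only genuinely non-automatic point --- the step I expect to carry the real content --- is the third block: in the coordinates where the box $Q$ is full, the box itself imposes no constraint on $h$, so the conclusion there rests entirely on the \emph{removed} origin, that is, on the facts $\ell x\notin A^{(1)}_{\ell x}$ and $xr\notin A^{(2)}_{xr}$ built into the definitions. The first two blocks are routine, the interval-stabilizer fact being the one small auxiliary statement worth isolating.
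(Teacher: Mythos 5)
Your proof is correct, and it establishes injectivity by a different mechanism than the paper. The paper \emph{reconstructs} $x$ from the set: viewing $\phi^{-1}(\ell^{-1}A^{(1)}_{\ell x})$ as a translated box with the single point $\phi^{-1}(x)$ deleted, it recovers each coordinate of $\phi^{-1}(x)$ as the value of deficient multiplicity (occurring $tm^{\rho-1}-1$ times instead of $tm^{\rho-1}$ in coordinates $i\le\rho$, and $m^{\rho}-1$ times instead of $m^{\rho}$ in coordinate $\rho+1$). You never reconstruct $x$; instead you cancel on the left (and right) to reduce both parts to the single statement that the translation stabilizer of the punctured box $Q\setminus\{0\}$ is trivial, and then kill $h$ blockwise by set identities rather than multiplicity counts. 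The structure exploited is the same (coordinate blocks, and crucially the deleted base point), but your formulation buys three things: part (2) becomes literally the same statement as part (1), where the paper only says the proof is similar; it isolates exactly where the puncture is indispensable (only in coordinates $1,\dots,\rho$, where $Q$ is a full box, via $h+(Q\setminus\{0\})=Q\setminus\{h\}$ --- at coordinate $\rho+1$ the properness $t<m$ of the interval already suffices); and it is robust in the case $\rho=0$ (the case where some $m_i>k$), where the paper's rule degenerates --- a value ``occurring $m^\rho-1=0$ times'' is then not unique in $\bZ_{m_1}$, and one genuinely needs your interval-boundary argument to locate the missing endpoint. What the paper's counting gains in return is a uniform, explicitly constructive recovery rule for $x$ with no case split between blocks. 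The one soft spot in your write-up is the interval-stabilizer fact, which you justify only by a gesture; the clean version: translation by $h_{\rho+1}$ is a bijection of $I$ preserving the property ``$a\in I$, $a-1\notin I$'', and since $I$ is a proper nonempty contiguous interval there is exactly one such $a$, forcing $a+h_{\rho+1}=a$ and hence $h_{\rho+1}=0$.
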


\begin{proof}
	Note that $\ell x \not\in A_{\ell x}^{(1)}$. This allows us to recover $x$ from the set $A_{\ell x}^{(1)}$.
	
	Consider $\phi^{-1}(\ell^{-1}A_{\ell x}^{(1)}) \subseteq \bZ_{m_1} \times \cdots \times \bZ_{m_\tau}$, which is the same as
	\[
	\{ \phi^{-1}(x) + \sigma_1\omega_1 + \cdots + \sigma_\rho \omega_\rho + \sigma \omega_{\rho+1} : 0 \leq \sigma_i \leq m-1 \text{ and } 0 \leq \sigma \leq t-1 \} \backslash \{ \phi^{-1}(x) \}.
	\]
	Hence, the elements of the set $\phi^{-1}(\ell^{-1}A_{\ell x}^{(1)})$ as well as $\phi^{-1}(x)$ only differ in the first $\rho+1$ coordinates. If we consider only the $i$-coordinate, where $1 \leq i \leq \rho$, every but one element appears $tm^{\rho-1}$ times. This exceptional element from $\bZ_{m_i}$ is the $i$-th coordinate of $\phi^{-1}(x)$. Now, consider the $(\rho+1)$-coordinate. One of the elements appears only $m^\rho-1$ times. This is the $(\rho+1)$-coordinate of $\phi^{-1}(x)$.

	The proof of the second statement is similar.
\end{proof}

\begin{lemma}
	\label{lem:condition4}
	Given a triple $(\ell a,br,\ell abr) \in \ell H \times H r \times \ell H r$. The number of $B_{\ell x,yr}$'s which contains $(\ell a,br,\ell abr)$ is $(tm^\rho-1)^2$.
\end{lemma}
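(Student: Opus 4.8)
The plan is to count, for a fixed triple $(\ell a, br, \ell abr)$, the number of pairs $(x,y) \in H^2$ such that $\ell a \in A^{(1)}_{\ell x}$ and $br \in A^{(2)}_{yr}$, since $B_{\ell x, yr}$ contains the triple precisely when both membership conditions hold. Because the condition on $x$ involves only $\ell a$ and the condition on $y$ involves only $br$, the count factors as a product: the number of valid $x$ times the number of valid $y$. I would therefore reduce to counting, separately, the $x$ with $\ell a \in A^{(1)}_{\ell x}$ and the $y$ with $br \in A^{(2)}_{yr}$, and show each count equals $tm^\rho - 1$.

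For the first factor, I would pull back through $\phi$ exactly as in the proof of \cref{lem:condition3}. The condition $\ell a \in A^{(1)}_{\ell x}$ translates, after left-multiplying by $\ell^{-1}$ and applying $\phi^{-1}$, into the statement that $\phi^{-1}(a)$ lies in the set
\[
\{ \phi^{-1}(x) + \sigma_1\omega_1 + \cdots + \sigma_\rho\omega_\rho + \sigma\omega_{\rho+1} : 0 \leq \sigma_i \leq m-1,\ 0 \leq \sigma \leq t-1 \} \setminus \{\phi^{-1}(x)\}.
\]
Equivalently, writing $u = \phi^{-1}(a)$, the valid $x$ are exactly those with $\phi^{-1}(x) = u - \sigma_1\omega_1 - \cdots - \sigma_\rho\omega_\rho - \sigma\omega_{\rho+1}$ for some choice of exponents in the prescribed ranges, \emph{excluding} the choice that would give $\phi^{-1}(x) = u$ itself (which is barred by the removal of $\{\phi^{-1}(x)\}$). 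This is precisely the mirror-image construction appearing in the cyclic and $\bZ_m^n$ model cases of \cref{sec:idea}: one chooses $x$ from a translate of the same box, with the single point $a$ removed. By the distinctness of the elements $\{\phi(\omega_i)^{\sigma_i}\}$ established in the proof of \cref{lem:condition1}, distinct exponent tuples give distinct $x$, so there are $tm^\rho$ tuples and hence $tm^\rho - 1$ valid values of $x$.

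By the symmetric argument — right-multiplying by $r^{-1}$ and applying $\phi^{-1}$, using the abelian structure of $H$ — the number of valid $y$ is likewise $tm^\rho - 1$. Multiplying the two independent counts gives $(tm^\rho-1)^2$, as claimed. I expect the only genuine subtlety to be the bookkeeping of the single excluded element: one must confirm that removing $\{\ell x\}$ from $A^{(1)}_{\ell x}$ corresponds, on the level of admissible $x$, to deleting exactly one tuple (the one sending $\phi^{-1}(x)$ to $u$), so that the count drops from $tm^\rho$ to $tm^\rho - 1$ rather than by some larger amount. This is guaranteed by the injectivity from \cref{lem:condition1}, so no degeneracy occurs.
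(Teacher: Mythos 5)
Your proposal is correct and follows essentially the same route as the paper: you invert the membership condition $\ell a \in A^{(1)}_{\ell x}$ to identify the admissible $x$ as a translated (inverse) box around $a$ with the single element $a$ removed, count $tm^\rho-1$ such $x$ (and likewise for $y$), and multiply the two independent counts. The only cosmetic difference is that you pull everything back through $\phi^{-1}$ into $\bZ_{m_1} \times \cdots \times \bZ_{m_\tau}$, whereas the paper works multiplicatively inside $H$; your explicit appeal to the injectivity from \cref{lem:condition1} to justify the count of $tm^\rho-1$ is a point the paper leaves implicit.
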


\begin{proof}
	We need to choose $x,y \in H$ such that $\ell a \in A^{(1)}_{\ell x}$ and $br \in A^{(2)}_{y r}$. To have $\ell a \in A^{(1)}_{\ell x}$, it is equivalent to have
	\[
	a \in \{ x \phi(\omega_1)^{\sigma_1} \cdots \phi(\omega_\rho)^{\sigma_\rho} \phi(\omega_{\rho+1})^\sigma : 0 \leq \sigma_i \leq m-1 \text{ and } 0 \leq \sigma \leq t-1 \} \backslash \{x\},
	\]
	which is in turn the same as choosing $x$ from the $(tm^\rho-1)$-element set
	\[
	\{ a \phi(\omega_1)^{-\sigma_1} \cdots \phi(\omega_\rho)^{-\sigma_\rho} \phi(\omega_{\rho+1})^{-\sigma} : 0 \leq \sigma_i \leq m-1 \text{ and } 0 \leq \sigma \leq t-1 \} \backslash \{a\}.
	\]
	Similarly, one can choose $y$ from the $(tm^\rho-1)$-element set \[
	\{ b \phi(\omega_1)^{-\sigma_1} \cdots \phi(\omega_\rho)^{-\sigma_\rho} \phi(\omega_{\rho+1})^{-\sigma} : 0 \leq \sigma_i \leq m-1 \text{ and } 0 \leq \sigma \leq t-1 \} \backslash \{b\}.
	\]
	Hence, there are $(tm^\rho-1)^2$ such $B_{\ell x,yr}$'s containing the given triple $(\ell a,br,\ell abr) \in \ell H \times H r \times \ell H r$.
\end{proof}

Finally, consider the pairs $((\ell a, br, \ell a b r), B_{\ell x,yr})$, where $(\ell a, br, \ell a b r) \in B_{\ell x,yr}$ are triples from $S$. By \eqref{eq:SlHr}, the number of triples from $S \cap (\ell H \times Hr)$ is at least $c|H|^2$. Hence, by \cref{lem:condition4}, the number of pairs we consider is at least $c|H|^2 (tm^\rho-1)^2$. There are $|H|^2$ different $B_{\ell x,yr}$'s by \cref{lem:condition3}. Using pigeonhole principle, there is a set $B_{\ell x,yr}$ which contains at least $c|H|^2 (tm^\rho-1)^2 / |H|^2 = c(tm^\rho-1)^2$ many triples from $S$. These $c(tm^\rho-1)^2$ triples are spanned by the set $A^{(1)}_{\ell x} \cup A^{(2)}_{y r} \cup A^{(1)}_{\ell x} A^{(2)}_{y r}$, which has at most
\[
|A^{(1)}_{\ell x}| + |A^{(2)}_{y r}| + |A^{(1)}_{\ell x} A^{(2)}_{y r}| \leq tm^\rho-1+tm^\rho-1+2(tm^\rho-1) = 4(tm^\rho-1)
\]
elements of $G$ by \cref{lem:condition1} and \cref{lem:condition2}, as desired.

Hence, we found $4k$ elements of $G$ that span at least $\frac c {16} k^2$ triples from $S$. By adjusting the constants, \cref{thm:main} is proved.

\section{Acknowledgements}
The author is grateful to J\'{o}zsef Solymosi for encouraging her to work on this problem and for some enlightening discussions.

\end{document}